\newtheorem{theorem}{Theorem}[section]
\newtheorem{lemma}[theorem]{Lemma}
\newtheorem{remark}[theorem]{Remark}
\newtheorem{definition}[theorem]{Definition}
\begin{document}
\title{Moduli spaces of  Type~$\mathcal{B}$ surfaces with torsion}
\author{Peter Gilkey}
\address{Mathematics Department, University of Oregon, Eugene, Oregon 97403 USA}
\email{gilkey@uoregon.edu}
\keywords{Ricci tensor; moduli space; homogeneous affine surface with torsion; affine Killing vector field}
\subjclass[2010]{53C21}
\begin{abstract}
We examine moduli spaces of locally homogeneous
surfaces of Type~$\mathcal{B}$ with torsion where the symmetric
Ricci tensor is non-degenerate. We also determine the space of affine Killing vector fields in this context.\end{abstract}
\keywords{Ricci tensor, moduli space, locally homogeneous affine manifold, connection with torsion}
\subjclass[2010]{53C21}
\maketitle
\section{Introduction}
Let $\nabla$ be a connection on the tangent bundle of a smooth manifold $M$ of dimension $m$.
Let $\vec x=(x^1,\dots,x^m)$ be a system of local coordinates on $M$.
Adopt the {\it Einstein convention} and sum over repeated indices to expand $\nabla_{\partial_{x^i}}\partial_{x^j}=\Gamma_{ij}{}^k\partial_{x^k}$ 
where $\Gamma=(\Gamma_{ij}{}^k)$ are the Christoffel symbols
of the connection. We say that $\nabla$ is {\it torsion free} if
$\nabla_XY-\nabla_YX-[X,Y]=0$ or, equivalently, if $\Gamma_{ij}{}^k=\Gamma_{ji}{}^k$. The importance of the torsion free condition lies in the 
observation that $\mathcal{M}$ is torsion free if and only if for every point $P$ of $M$, there exist coordinates centered at $P$ so that $\Gamma_{ij}{}^k(P)=0$.
Thus in the torsion free setting, one may normalize the coordinate system so that only the second and higher order derivatives of the connection $1$-form play a role
in defining local invariants of Weyl type. 
We also note that if $\nabla$ is a connection with torsion, then there is a naturally associated torsion free connection $\tilde\nabla$
with the same parametrized geodesics.  For these reasons, torsion free connections have been studied extensively in the literature.
There are, however, natural situations in which manifolds with torsion enter.  We refer,
for example, to work on torsion-gravity
 \cite{BM16,C12,F13,F14,F15,LP13,LP13a,VCF15}, on hyper-K\"ahler with torsion supersymmetric sigma models
 \cite{FIS14,FS16,FS15,Sx12},
  on string theory \cite{GMW04,I04},
 on almost hypercomplex geometries \cite{M11}, on spin geometries \cite{K10},  on B-metrics \cite{M12,S12}, 
on contact geometries \cite{ACF05}, on almost product manifolds \cite{Me12}, on non-integrable
geometries \cite{ACFH15,B15}, on the non-commutative residue
for manifolds with boundary \cite{WWY14},
on Hermitian and anti-Hermitian
geometry \cite{MG11}, on CR geometry \cite{DL15}, on
Einstein--Weyl gravity at the linearized level \cite{DEG13}, on Yang-Mills flow with torsion \cite{GDLS14}, on ESK theories \cite{RFSC13},
on double field theory \cite{HZ13}, on BRST theory \cite{FLM16},
and on the symplectic and elliptic geometries of gravity \cite{CSE11}. 

The curvature operator $R(X,Y):=\nabla_X\nabla_Y-\nabla_Y\nabla_X-\nabla_{[X,Y]}$ and the Ricci tensor 
$\rho:=\operatorname{Tr}(Z\rightarrow R(Z,X)Y)$ have components:
\begin{eqnarray*}
&&R_{ijk}{}^l=\partial_{x_i}\Gamma_{jk}{}^l-\partial_{x^j}\Gamma_{ik}{}^l
+\Gamma_{in}{}^l\Gamma_{jk}{}^n-\Gamma_{jn}{}^l\Gamma_{ik}{}^n,\\
&&\rho_{jk}=\partial_{x_i}\Gamma_{jk}{}^i-\partial_{x^j}\Gamma_{ik}{}^i
+\Gamma_{in}{}^i\Gamma_{jk}{}^n-\Gamma_{jn}{}^i\Gamma_{ik}{}^n\,.
\end{eqnarray*}
Since in this quite general setting, unlike the pseudo-Riemannian context, the Ricci tensor need not be symmetric, we introduce the {\it symmetric Ricci tensor} defining:
$$
\rho_s(X,Y):=\textstyle\frac12\{\rho(X,Y)+\rho(Y,X)\}\,.
$$

One is interested in classifying such structures up to isomorphism, i.e. up to the action of the pseudo group of germs of diffeomorphisms.
And it is natural to start with the locally homogeneous examples; $\mathcal{M}:=(M,\nabla)$ is said to be
{\it locally homogeneous} if given any two points $P$ and $Q$ of $M$, there is the germ of a diffeomorphism
$\Phi_{P,Q}$ taking $P$ to $Q$ which commutes with $\nabla$.  The case of surfaces is particularly tractable and is of interest in its own
right; connections on surfaces have been used to construct
new examples of pseudo-Riemannian metrics that do not have a Riemannian analog \cite{CGGV09, CGV10, De11, KoSe}.
We shall assume henceforth that $\mathcal{M}$ is not flat. The following result was established in the torsion free
setting by  Opozda \cite{Op04} and later extended by Arias-Marco and Kowalski \cite{AMK08} to the case of surfaces with torsion.
\begin{theorem}
Let $\mathcal{M}=(M,\nabla)$ be a locally homogeneous surface. Then at least one of the following
three possibilities hold which describe the local geometry:
\begin{itemize}
\item[($\mathcal{A}$)] There exists a coordinate atlas so the Christoffel symbols
$\Gamma_{ij}{}^k$ are constant.
\item[($\mathcal{B}$)] There exists a coordinate atlas so the Christoffel symbols have the form
$\Gamma_{ij}{}^k=(x^1)^{-1}C_{ij}{}^k$ for $C_{ij}{}^k$ constant and $x^1>0$.
\item[($\mathcal{C}$)] $\nabla$ is the Levi-Civita connection of a metric of constant Gauss
curvature.
\end{itemize}\end{theorem}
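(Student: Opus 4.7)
The plan is to work with the Lie algebra $\mathfrak{g}$ of germs of affine Killing vector fields, i.e.\ local vector fields $X$ satisfying $\mathcal{L}_X\nabla=0$. Local homogeneity is equivalent (via a standard Frobenius/analytic-continuation argument on the linear PDE defining affine Killing fields) to the statement that the evaluation map $\mathfrak{g}\to T_PM$ is surjective at every point $P$. Thus one may always extract a two-dimensional subspace $\mathfrak{h}\subset\mathfrak{g}$ whose evaluation at $P$ is an isomorphism onto $T_PM$, and one may arrange that $\mathfrak{h}$ is actually a Lie subalgebra by taking the span of two well-chosen Killing fields and, if necessary, their bracket (using that $\dim\mathfrak{g}\le 6$ forces enough linear dependence to close off a two-dimensional $\mathfrak{h}$ unless the algebra is unusually large). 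First I would establish this reduction and then split into cases by the isomorphism class of $\mathfrak{h}$.

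In the \emph{abelian} case $[X,Y]=0$, the simultaneous flow box theorem provides coordinates $(x^1,x^2)$ in which $X=\partial_{x^1}$, $Y=\partial_{x^2}$. The affine Killing equation $\mathcal{L}_{\partial_{x^i}}\Gamma=0$ then reads $\partial_{x^i}\Gamma_{jk}{}^l=0$, so the Christoffel symbols are constant: this is Type~$\mathcal{A}$. In the \emph{non-abelian} case one has, after rescaling, $[X,Y]=Y$; the only two-dimensional non-abelian Lie algebra. One chooses coordinates adapted to this structure: flowing along $Y$ gives a coordinate $x^2$ with $Y=\partial_{x^2}$, and then $X$ may be straightened so that $X = x^1\partial_{x^1}+\alpha\,\partial_{x^2}$ on a neighborhood with $x^1>0$; a further shear in $x^2$ removes $\alpha$ and yields $X=x^1\partial_{x^1}$. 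The Killing equations $\mathcal{L}_Y\Gamma=0$ and $\mathcal{L}_X\Gamma=0$ then translate to $\partial_{x^2}\Gamma_{jk}{}^l=0$ together with a homogeneity condition forcing $\Gamma_{jk}{}^l$ to be a homogeneous function of $x^1$ of degree $-1$. Combining these gives $\Gamma_{ij}{}^k=(x^1)^{-1}C_{ij}{}^k$ with $C_{ij}{}^k$ constant, which is Type~$\mathcal{B}$.

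Finally I would handle the residual case in which no two-dimensional subalgebra $\mathfrak{h}$ admits one of the coordinate normalizations above, i.e.\ when the isotropy at $P$ is large enough that every $\mathfrak{h}$ one tries fails to close. In that situation the isotropy subalgebra of $\mathfrak{g}$ at $P$ is at least one-dimensional and acts on $T_PM$ by a nontrivial linear representation commuting with the curvature; an elementary representation-theoretic argument shows the curvature tensor at $P$ must be isotropic, and homogeneity propagates this to all of $M$. Feeding this back into the second Bianchi identity (available once one passes to the naturally associated torsion-free connection $\tilde\nabla$ mentioned in the introduction, which has the same parametrized geodesics) shows that $\tilde\nabla$ has constant sectional curvature, placing us in Type~$\mathcal{C}$ after checking that the torsion is determined by the metric data.

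The main obstacle is the last case: one must argue carefully that a large affine Killing algebra in the torsion setting still collapses to the constant-curvature model, since the symmetries of the torsion tensor need not be as rigid as in the torsion-free framework treated by Opozda. The extension performed by Arias-Marco and Kowalski handles exactly this point by exploiting the decomposition of the full connection into its symmetric part and its torsion and showing that both pieces must be invariant under the enlarged isotropy, which is the step I would follow.
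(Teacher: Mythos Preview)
The paper does not prove this theorem at all: it is quoted as a known result, established in the torsion-free case by Opozda and extended to connections with torsion by Arias--Marco and Kowalski, and the paper builds on it rather than re-deriving it. So there is no ``paper's own proof'' to compare against; your sketch is attempting to reproduce the cited literature.

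That said, your outline has two concrete errors worth flagging. First, in the non-abelian case you write that after straightening $Y=\partial_{x^2}$ one can arrange $X=x^1\partial_{x^1}$; but then $[X,Y]=0$, not $Y$. The bracket relation $[X,Y]=Y$ with $Y=\partial_{x^2}$ forces $X$ to contain a $-x^2\partial_{x^2}$ term, and the correct normal form (after further coordinate changes) is $X=x^1\partial_{x^1}+x^2\partial_{x^2}$, which is exactly the generator appearing in the paper's algebra $\mathfrak{h}$. With that corrected vector field the homogeneity computation does yield $\Gamma_{ij}{}^k=(x^1)^{-1}C_{ij}{}^k$.

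Second, and more seriously, your reduction to a two-dimensional subalgebra is not always possible, and this is precisely where Type~$\mathcal{C}$ enters. The Lie algebra $\mathfrak{so}(3)$ of Killing fields on the round sphere is three-dimensional and contains \emph{no} two-dimensional subalgebra whatsoever, so the step ``one may arrange that $\mathfrak{h}$ is actually a Lie subalgebra'' fails outright there; it is not a matter of isotropy preventing a normalization. Your Type~$\mathcal{C}$ paragraph is accordingly off target: the genuine dichotomy is between the case where $\mathfrak{g}$ admits a two-dimensional subalgebra transitive near $P$ (giving Types~$\mathcal{A}$ or~$\mathcal{B}$) and the case where it does not, which one then shows forces $\mathfrak{g}\cong\mathfrak{so}(3)$ and $\nabla$ to be the Levi--Civita connection of a constant-curvature metric. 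In particular the torsion must vanish in Type~$\mathcal{C}$; your phrase ``the torsion is determined by the metric data'' misreads what has to be shown.
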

These classes are not disjoint. While there are no surfaces which are both Type~$\mathcal{A}$ and Type~$\mathcal{C}$, there are surfaces which are both
Type~$\mathcal{A}$ and Type~$\mathcal{B}$, and, up to isomorphism, there are are two surfaces which are Type~$\mathcal{B}$ and Type~$\mathcal{C}$ -- the
hyperbolic plane and the Lorentzian analogue. The remaining Type~$\mathcal{C}$ geometry is modeled on that
of the round sphere and plays no role in our analysis. 

As the Type~$\mathcal{A}$ setting was discussed in \cite{BGGP16,BGGP16a,G16}, we shall concentrate
on the Type~$\mathcal{B}$ setting in this paper. We introduce some notational conventions:
\begin{definition}\rm
Let $\mathcal{W}_{\mathcal{B}}(2):=(\mathbb{R}^2)^*\otimes(\mathbb{R}^2)^*\otimes\mathbb{R}^2$; two indices are
down and one index is up. If $C\in\mathcal{W}_{\mathcal{B}}(2)$, let $\Gamma_{ij}{}^k=(x^1)^{-1}C_{ij}{}^k$ be the Christoffel symbols of the
associated connection $\nabla=\nabla^C$ of Type~$\mathcal{B}$. If $p+q=2$, let
$$
\mathcal{W}_{\mathcal{B}}(p,q):=\{C\in\mathcal{W}_{\mathcal{B}}(2):\operatorname{signature}(\rho_{s,C})=(p,q)\}\,.
$$
Let $\mathfrak{W}_{\mathfrak{B}}(p,q)$ (resp. $\mathfrak{W}_{\mathfrak{B}}^+(p,q)$)
be the associated moduli space where we identify two connections $\nabla$ and $\tilde\nabla$ if
there exists a local diffeomorphism (resp. orientation preserving local diffeomorphism) of $\mathbb{R}^+\times\mathbb{R}$ intertwining $\tilde\nabla$ and $\nabla$.
Let $\rho_{C}$ be the Ricci tensor of $\nabla^C$.
The components of $\rho_C$ are then given by:
\begin{equation}\begin{array}{l}\label{E1.a}
 \rho_{C;11}=(x^1)^{-2}\{(C_{11}{}^1-C_{12}{}^2+1) C_{21}{}^2+C_{11}{}^2 (C_{22}{}^2-C_{21}{}^1)\},\\[0.05in]
 \rho_{C;12}=(x^1)^{-2}\{C_{22}{}^2+C_{12}{}^1 C_{21}{}^2-C_{11}{}^2 C_{22}{}^1\}, \\[0.05in]
 \rho_{C;21}=(x^1)^{-2}\{-C_{21}{}^1+C_{12}{}^1 C_{21}{}^2-C_{11}{}^2 C_{22}{}^1\},\\[0.05in]
 \rho_{C;22}=(x^1)^{-2}\{(C_{11}{}^1-C_{12}{}^2-1) C_{22}{}^1+C_{12}{}^1 (C_{22}{}^2-C_{21}{}^1)\}\,.
\end{array}\end{equation}\end{definition}

\begin{definition}\label{D1.3}\rm
The following groups will play an important role in our analysis:
\begin{eqnarray*}
&&\mathcal{G}:=\left\{T:(x^1,x^2)\rightarrow(mx^1,ax^1+bx^2+d)\text{ for }m>0\text{ and }b\ne0\right\},\\
&&\mathcal{H}:=\{T:(x^1,x^2)\rightarrow(mx^1,mx^2+d)\text{ for }m>0\}\subset\mathcal{G},\\
&&\mathcal{I}:=\left\{T:(x^1,x^2)\rightarrow(x^1,ax^1+bx^2)\text{ for }b\ne0\right\}\subset\mathcal{G},\\
&&\mathcal{I}^+:=\left\{T:(x^1,x^2)\rightarrow(x^1,ax^1+bx^2)\text{ for }b>0\right\}\subset\mathcal{I}\,.
\end{eqnarray*}
The Lie group $\mathcal{G}$ is the group of affine transformations which preserve $\mathbb{R}^+\times\mathbb{R}$. The group $\mathcal{G}$ is generated by the
subgroups $\mathcal{H}$ and $\mathcal{I}$; $\mathcal{I}^+$ is the connected component of the identity in $\mathcal{I}$.
If $T\in\mathfrak{H}$, then $T$ acts by homotheties and translations; such a $T$ preserves any Type~$\mathcal{B}$ connection.
Let $\nabla^e$ be the flat Euclidean connection on $\mathbb{R}^+\times\mathbb{R}$ with vanishing Christoffel symbols; $\nabla^C=\nabla^e+(x^1)^{-1}C$.
 Let $T\in\mathcal{I}$. 
Since $T\nabla^e=\nabla^eT$ and $T^*(x^1)=x^1$,  $T^*(\nabla^C)=\nabla^{T^*C}$ where $T^*C$ is defined by the usual linear action
on $\mathcal{W}_{\mathcal{B}}(2)$. More specifically,
$$
(T^*C)(\partial_{x^i},\partial_{x^j},dx^k):=C(T\partial_{x^i},T\partial_{x^j},Tdx^k)\,.
$$
\end{definition}

\begin{definition}\rm
If $X$ is a smooth vector field on $M$, let $\Xi_t^X$ be the local flow
defined by $X$. We say that $X$ is an {\it affine Killing vector field} if $\Xi_t^*\nabla=\nabla$ or,
equivalently (see Kobayashi-Nomizu \cite[Chapter VI]{KN63}), the Lie derivative $\mathcal{L}_X(\nabla)$ of $\nabla$ vanishes.
If $C\in\mathcal{W}_{\mathcal{B}}(2)$ and if $P\in\mathbb{R}^+\times\mathbb{R}$,
let $\mathfrak{X}_C(P)$ be the Lie algebra of germs of affine Killing vector fields at $P$ and let
$\mathcal{X}_C(P)$ be the space of germs of diffeomorphisms of $\mathbb{R}^+\times\mathbb{R}$ at $P$ so $\Phi^*\{\nabla^C\}:=\Phi^{-1}\circ\nabla^C\circ\Phi$ is again a connection of
Type~$\mathcal{B}$. Since the geometry is homogeneous, the particular point $P$ which is chosen is irrelevant.
Let $\mathfrak{h}:=\operatorname{Span}_{\mathbb{R}}\{x^1\partial_{x^1}+x^2\partial_{x^2},\partial_{x^2}\}$ be the Lie algebra of $\mathcal{H}$.
 If  $C\in\mathcal{W}_{\mathcal{B}}(2)$, and $P\in\mathbb{R}^+\times\mathbb{R}$, then $\mathcal{H}\subset\mathcal{X}_C(P)$
and $\mathfrak{h}\subset\mathfrak{X}_C(P)$.
\end{definition}

The following is the main result of this paper. It shows the moduli spaces $\mathfrak{W}^+(p,q)$ are real analytic manifolds and determines the
affine Killing vector fields for any $C\in\mathcal{W}(p,q)$.

\begin{theorem}\label{T1.5}
Let $p+q=2$.
\begin{enumerate}
\item If $C\in\mathcal{W}_{\mathcal{B}}(p,q)$ is not of Type~$\mathcal{C}$, then
$\mathcal{X}(C,P)=\mathcal{G}$,  $\mathfrak{X}(P)\}=\mathfrak{h}$, and $\nabla^C$ is not of Type~$\mathcal{A}$.
\item $\mathfrak{W}_{\mathcal{B}}^+(p,q)$ may be identified with $\mathfrak{Z}_{\mathcal{B}}^+(p,q)/\mathcal{I}^+$.
\item $\mathfrak{W}_{\mathcal{B}}^+(p,q)$ has a natural real-analytic structure.
\item   $\mathcal{Z}_{\mathcal{B}}^+(p,q)\rightarrow\mathfrak{Z}_{\mathcal{B}}^+(p,q)\times\mathcal{I}^+$ is a trivial 
$\mathcal{I}^+$ principal bundle.
\end{enumerate}\end{theorem}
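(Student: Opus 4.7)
The four statements rest on Part (1); once the structure of $\mathfrak{X}_C(P)$ and $\mathcal{X}_C(P)$ is established, Parts (2)--(4) follow essentially formally. The plan is to first solve the affine Killing equations to compute $\mathfrak{X}_C(P)$, next promote to the finite-dimensional statement $\mathcal{X}_C(P)=\mathcal{G}$ by examining the change-of-Christoffel-symbols law, and finally to extract the quotient, manifold, and triviality statements in turn.

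For $\mathfrak{X}_C(P)=\mathfrak{h}$: I would write a candidate Killing field as $X=f^1\partial_{x^1}+f^2\partial_{x^2}$ and expand the condition $\mathcal{L}_X\nabla^C=0$ in components,
$$
\partial_i\partial_j f^k+\Gamma_{lj}{}^k\partial_i f^l+\Gamma_{il}{}^k\partial_j f^l-\Gamma_{ij}{}^l\partial_l f^k+f^l\partial_l\Gamma_{ij}{}^k=0,
$$
then substitute $\Gamma_{ij}{}^k=(x^1)^{-1}C_{ij}{}^k$. After multiplication by $(x^1)^2$ this becomes eight linear PDEs with polynomial-in-$x^1$ coefficients. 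Matching powers of $x^1$ forces $f^1,f^2$ to be polynomials of low degree in $x^1$ with coefficients depending on $x^2$ alone; a further separation in $x^2$, invoking the non-degeneracy of $\rho_{s,C}$ together with the hypothesis that $\nabla^C$ is not of Type $\mathcal{C}$, cuts the solution space down to $\mathfrak{h}=\operatorname{Span}_{\mathbb{R}}\{x^1\partial_{x^1}+x^2\partial_{x^2},\partial_{x^2}\}$. The non-Type-$\mathcal{A}$ conclusion is then immediate: a Type $\mathcal{A}$ connection admits a two-dimensional abelian subalgebra of affine Killing fields (the translations in Type $\mathcal{A}$ coordinates), but $\mathfrak{h}$ is two-dimensional and non-abelian, as $[x^1\partial_{x^1}+x^2\partial_{x^2},\partial_{x^2}]=-\partial_{x^2}$.

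For $\mathcal{X}_C(P)=\mathcal{G}$: any local diffeomorphism $\Phi:(x^1,x^2)\mapsto(y^1,y^2)$ with $\Phi^*\nabla^C$ of Type $\mathcal{B}$ must satisfy the change-of-Christoffel-symbols law with both sides of the form $(\text{constants})/(\text{first coordinate})$. Comparing the $1/x^1$ singularity forces $y^1=mx^1$ for a positive constant $m$, and the residual constant-coefficient relations then pin down $y^2=ax^1+bx^2+d$ with $b\ne 0$. The non-degeneracy of $\rho_{s,C}$ again supplies the rigidity that rules out higher-order corrections, giving $\Phi\in\mathcal{G}$.

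Parts (2)--(4) then follow by formal arguments. Every $T\in\mathcal{G}$ factors uniquely as $T=T_H T_I$ with $T_H\in\mathcal{H}$ and $T_I\in\mathcal{I}$, and orientation preservation restricts this to $T_I\in\mathcal{I}^+$. Since $\mathcal{H}$ acts trivially on every $C\in\mathcal{W}_{\mathcal{B}}(2)$, equivalence of $C$ and $\tilde C$ by an orientation-preserving local diffeomorphism coincides with equivalence by the $\mathcal{I}^+$-action on $\mathfrak{Z}_{\mathcal{B}}^+(p,q)$, proving Part (2). The $\mathcal{I}^+$-action is polynomial and, by Part (1), free on $\mathfrak{Z}_{\mathcal{B}}^+(p,q)$; standard quotient theory then provides the real-analytic manifold structure of Part (3). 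For Part (4), a global analytic section is obtained by normalizing two conveniently chosen components of $C$ so as to fix the two parameters $(a,b)$ of $\mathcal{I}^+$, which trivializes the principal bundle. The main obstacle throughout is Part (1); the essential difficulty is to carry out the case analysis of the Killing equations and the transformation law so as to cleanly isolate and exclude the Type $\mathcal{C}$ and degenerate-Ricci branches, which is precisely where the generic two-dimensional answer can break down.
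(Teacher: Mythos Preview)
Your outline diverges from the paper's strategy and contains two substantive gaps.

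\textbf{The argument for $\mathcal{X}_C(P)=\mathcal{G}$ is incomplete.} You claim that comparing the $1/x^1$ singularity in the Christoffel transformation law forces $y^1=mx^1$. But this cannot be right as stated: take $C=C_\pm$, the Levi--Civita connection of the hyperbolic plane or its Lorentzian analogue. These are Type~$\mathcal{B}$ connections whose affine automorphism group is the full $\operatorname{PSL}(2,\mathbb{R})$ acting by linear fractional transformations, and a generic such map does \emph{not} send $x^1$ to a scalar multiple of itself. So the singularity comparison alone cannot yield $y^1=mx^1$; the non-Type-$\mathcal{C}$ hypothesis must enter in an essential way that your sketch does not supply. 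The paper handles this by first using $\mathcal{I}^+$ to normalize $\rho_{s,C}$ to a multiple of one of three reference metrics $g_+,g_-,g_0$, then observing that any $\Phi$ in $\mathcal{X}_C(P)$ must be an isometry of that metric, and finally checking \emph{within the explicitly known isometry group} (linear fractional transformations for $g_\pm$, an explicit three-parameter group for $g_0$) which elements preserve Type~$\mathcal{B}$ form; it is precisely here that the non-Type-$\mathcal{C}$ assumption eliminates the extra isometries. Your direct attack on the transformation law would need an analogous mechanism, and none is indicated. Similarly, for $\mathfrak{X}_C(P)=\mathfrak{h}$ the paper deduces this from the group result rather than by solving the Killing PDE; it explicitly remarks that the classification of Killing algebras in the torsion setting is ``vastly more complicated,'' so your PDE sketch is at best a promissory note.

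\textbf{Properness is missing.} For Part~(3) you invoke ``standard quotient theory'' from freeness and polynomiality of the $\mathcal{I}^+$-action. Freeness alone does not give a Hausdorff real-analytic quotient; one needs the action to be proper. The paper devotes its longest technical lemma to establishing properness via a case analysis on the limiting behaviour of $\rho_{s,C}$ under sequences $S_{a_n,b_n}$, and this is not a formality. Your argument for Part~(4) via a global section is fine in spirit (and is essentially how the paper proceeds in the definite case), but it presupposes the manifold structure of Part~(3).

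Your Lie-algebra argument that $\nabla^C$ is not Type~$\mathcal{A}$ (a Type~$\mathcal{A}$ structure would force a two-dimensional abelian subalgebra of Killing fields, whereas $\mathfrak{h}$ is non-abelian) is correct and is cleaner than the paper's curvature-based argument.
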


Assertion~(1) shows that only the linear action is
important when considering the local isomorphism type of a Type~$\mathcal{C}$ geometry if the symmetric Ricci tensor is assumed non-degenerate.
Examples of \cite{BGGP16a} show that this can fail of $\rho_{s,C}$ is permitted to have rank $1$ even in the torsion free setting; the assumption that $\rho_{s,C}$
is non-singular is essential.  If instead of considering the more general case of 
Type~$\mathcal{B}$ connections with torsion, we restrict to the subset of Christoffel symbols satisfying the symmetry $C_{ij}{}^k=C_{ji}{}^k$,
the same arguments hold and we obtain some (but not all)
of the results of \cite{BGGP16a} in the Type~$\mathcal{B}$ setting using an entirely different approach.
We note that  \cite{BGGP16a} relied on the complete description of $\mathfrak{X}(C,P)$ for all torsion $C$ which was given in \cite{BGGP16}; 
the discussion of \cite{AMK08}
shows that the possible algebras of Killing vector fields in the setting of torsion is vastly more complicated and thus an approach based on the exhaustive
classification of \cite{BGGP16} is unlikely to be successful in the setting of connections with torsion.

Here is a brief guide to the remainder of this paper. Assertion~1 will be proved in Section~\ref{S2}.
 In Section~\ref{S2.1}, we use the linear fractional transformations over the complex numbers 
or over the para-complex numbers to discuss the orientation preserving isometries of the hyperbolic
plane and of the Lorentzian analogue. In Section~\ref{S2.3}, we use the action of $\mathcal{I}^+$ to put the
symmetric Ricci tensor in normal form. We then examine these normal forms to show $\mathcal{X}(C,P)=\mathcal{H}$ in Section~\ref{S2.4} and in Section~\ref{S2.5}.
We use this analysis to establish the remaining assertions of Theorem~\ref{T1.5}~(1) in
Section~\ref{S2.6} and in Section~\ref{S2.7}. 
The second assertion of Theorem~\ref{T1.5} is immediate from the first
assertion. The remaining assertions are established in Section~\ref{S3}. In Section~\ref{S3.4} we discuss corresponding results for the unoriented moduli spaces
$\mathcal{W}_{\mathcal{B}}(p,q)$.
\section{Reducing the structure group}\label{S2}
The following metric tensors will play a central role in our analysis:
\begin{definition}\label{D2.1}\rm
Let
$$
g_\pm:=\frac{dx^1\otimes dx^1\pm dx^2\otimes dx^2}{(x^1)^2}\text{ and }
g_0:=\frac{dx^1\otimes dx^2+dx^2\otimes dx^1}{(x^1)^2}\,.
$$
The associated Levi-Civita connections $\nabla^\pm$ and $\nabla^0$ are of Type~$\mathcal{B}$; their non-zero Christoffel symbols 
and their Ricci tensors are given by
$$\begin{array}{l}
\textstyle\Gamma_{\pm;11}{}^1=\Gamma_{\pm;12}{}^2=\Gamma_{\pm;21}{}^2=-\frac1{x^1},\ \Gamma_{\pm;22}{}^1=\pm\frac1{x^1},\quad
\textstyle\Gamma_{0,11}{}^1=-\frac1{x^1},\\[0.05in]
\rho_\pm:=\frac1{(x^1)^2}\left(\begin{array}{cc}-1&0\\0&\mp1\end{array}\right),\quad\rho_0=\left(\begin{array}{cc}0&0\\0&0\end{array}\right)\,.
\end{array}$$
\end{definition}

Let $\operatorname{SO}(g)$ be the 3-dimensional Lie group of orientation preserving isometries of
the metric $g\in\{g_+,g_-,g_0\}$. In Section~\ref{S2.1}, we study $\operatorname{SO}(g_\pm)$ and in Section~\ref{S2.2}, we study $\operatorname{SO}(g_0)$.
In Section~\ref{S2.3}, we use the action of the group $\mathcal{I}^+$ to normalize the Ricci tensor to be, modulo sign, $g_+$, $g_-$, or $g_0$.
In the remaining sections, we use the results of these two sections to establish the first assertion of Theorem~\ref{T1.5}
by considering these 3 cases seriatim.

\subsection{Isometry groups of the metrics $g_\pm$}\label{S2.1}
Let $(u,v)=(x^2,x^1)$ to put things in a more standard form.
The orientation preserving isometry groups $\operatorname{SO}(g_\pm)$ of the metrics 
$g_\pm:=v^{-2}(du^2\pm dv^2)$ can be expressed in terms of linear fractional transformations. Let
$$
\operatorname{id}:=\left(\begin{array}{ll}1&0\\0&1\end{array}\right),\quad
\iota_-:=\left(\begin{array}{cc}0&1\\-1&0\end{array}\right),\quad
\iota_+:=\left(\begin{array}{cc}0&1\\1&0\end{array}\right).
$$
Note that $\iota_\pm^2=\pm\operatorname{id}$. Let
$\mathbb{C}_{\pm}:=\operatorname{Span}_{\mathbb{R}}\{\operatorname{id},\iota_\pm\}\subset M_2(\mathbb{R})$;
$\mathbb{C}_-$ is isomorphic to the complex numbers and $\mathbb{C}_+$ is isomorphic to the para-complex numbers.
Let $z_\pm:=u+v\iota_\pm\in\mathbb{C}_\pm$. If $z_-\ne0$, then $z_-$ is invertible; $z_+$ is invertible if and only if $u\ne\pm v$. Let
$$
T_{\pm,A}(z_\pm):=\frac{az_\pm+b}{cz_\pm+d}\text{ for }A=
\left(\begin{array}{cc}a&b\\c&d\end{array}\right)\in\operatorname{SL}(2,\mathbb{R})\,.
$$
Note that $T_{\pm,A}$ is not defined on all of $\mathbb{C}_\pm$ but only on the open dense subset where $cz_\pm+d$ is invertible. Let
$$\begin{array}{lll}
\Re\{z_\pm\}:=u,&\Im\{z_\pm\}:=v,&\bar z_\pm:=u-v\iota_\pm,\\[0.05in]
\textstyle dz_\pm:=du\operatorname{id}+ dv\iota_\pm,& d\bar z:=du\operatorname{id}- dv\iota_\pm,&
\operatorname{PSL}(2,\mathbb{R}):=\operatorname{SL}(2,\mathbb{R})/\{\pm\operatorname{id}\},\\[0.05in]
\partial_{z_\pm}:=\frac12(\partial_u\operatorname{id}\pm\iota_\pm\partial_v),&
\partial_{\bar z_\pm}:=\frac12(\partial_u\operatorname{id}\mp\iota_\pm\partial_v).
\end{array}$$
We have $z_\pm\bar z_\pm=(u^2-v^2)\operatorname{id}$.

\begin{lemma}\label{L2.2}
Adopt the notation established above.
\begin{enumerate}
\item  Let $A$ and $B$ belong to $\operatorname{SL}(2,\mathbb{R})$. Then $T_{\pm,A}\circ T_{\pm,B}=T_{\pm,A\circ B}$.
\item Let $w_\pm:=T_{\pm,A}z_\pm$. Then
\begin{enumerate}
\item $\Im(w_\pm)=\Im(z_\pm)(cz_\pm+d)^{-1}(c\bar z_\pm+d)^{-1}$,
\item $dw_\pm=(cz_\pm+d)^{-2}dz_\pm$ and $d\bar w_\pm=(c\bar z_\pm+d)^{-2}d\bar z_\pm$,
\item $\partial_{w_\pm}=(cz_\pm+d)^2\partial_{z_\pm}$ and $\partial_{\bar w_\pm}=(c\bar z_\pm+d)^2\partial_{\bar z_\pm}$.
\end{enumerate}
\item If $A\in\operatorname{SL}(2,\mathbb{R})$,
then $T_{\pm,A}^*g_\pm=g_\pm$.
\item The map $A\rightarrow T_{\pm,A}$ identifies $\operatorname{PSL}(2,\mathbb{R})$ with the group of orientation preserving isometries of $g_\pm$.
\end{enumerate}\end{lemma}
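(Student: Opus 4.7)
The plan is to prove assertions (1)--(3) by uniform formal calculations in the commutative rings $\mathbb{C}_\pm$, and then derive (4) from these by a standard homomorphism and dimension-count argument. Throughout, one must restrict to the open dense subset where the denominators $cz_\pm+d$ are invertible, since $\mathbb{C}_+$ has zero divisors. The main obstacle will be the surjectivity step of (4), particularly in the para-complex case $\mathbb{C}_+$ where the geometry is less standard.

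For (1), the M\"obius composition identity is a rational identity valid over any commutative ring, and is proved by direct expansion using commutativity. For (2a), I would use the identity $w_\pm-\bar w_\pm=2\iota_\pm\Im(w_\pm)$ and compute
$$w_\pm-\bar w_\pm=\frac{(az_\pm+b)(c\bar z_\pm+d)-(a\bar z_\pm+b)(cz_\pm+d)}{(cz_\pm+d)(c\bar z_\pm+d)}=\frac{(ad-bc)(z_\pm-\bar z_\pm)}{(cz_\pm+d)(c\bar z_\pm+d)};$$
dividing by $2\iota_\pm$ and using $ad-bc=1$ gives the claim. Part (2b) is the quotient rule applied to $(az+b)(cz+d)^{-1}$ with commutativity, and again the factor $ad-bc=1$ appears. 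Assertion (2c) is dual to (2b), obtained either by dualizing or by a direct chain rule computation. For (3), the key is the uniform identity $g=\Im(z)^{-2}(dz\odot d\bar z)$, where the scalar part of the $\mathbb{C}_\pm$-valued symmetric tensor product is taken and the appropriate pairing of $g_\pm$ and $z_\mp$ is forced by $\iota_\pm^2=\pm\id$; combined with (2a) and (2b), the four factors of $(cz+d)$ and $(c\bar z+d)$ cancel exactly, yielding invariance.

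For (4), injectivity of $A\mapsto T_{\pm,A}$ modulo $\pm\id$ follows by observing that $T_{\pm,A}=\id$ forces the polynomial identity $az+b=z(cz+d)$, hence $c=b=0$, $a=d$, and $a^2=1$. Combined with (1) and (3), this yields an injective homomorphism $\operatorname{PSL}(2,\mathbb{R})\hookrightarrow\operatorname{Isom}(g_\pm)$. Orientation preservation follows from connectedness of $\operatorname{PSL}(2,\mathbb{R})$ and $T_{\pm,\id}=\id$. The hard part is surjectivity. My approach is a dimension count: both $\operatorname{PSL}(2,\mathbb{R})$ and the stated $\operatorname{SO}(g_\pm)$ are connected three-dimensional Lie groups, so an injective smooth homomorphism of one into the other is necessarily a covering map onto the identity component. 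Alternatively, one may exhibit transitivity of the $\operatorname{PSL}(2,\mathbb{R})$-action on an appropriate open domain (the upper half plane for the Riemannian case, one of the open chambers $\{|u|<v\}$ or $\{|u|>v\}$ in the para-complex case) and compute the isotropy at a reference point such as $\iota_\pm$ directly, matching the stabilizer in $\operatorname{SO}(g_\pm)$ and concluding by a standard orbit-stabilizer argument.
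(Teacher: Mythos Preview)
Your proposal is correct and, for assertions (1)--(3), essentially identical to the paper's proof: the same $w-\bar w$ computation for (2a), the quotient rule for (2b), duality for (2c), and the identity $g_\mp\,\id=(2\Im z_\pm)^{-2}(dz_\pm\otimes d\bar z_\pm+d\bar z_\pm\otimes dz_\pm)$ for (3), with the same cancellation of the four factors of $cz+d$ and $c\bar z+d$.

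For (4) there is a mild divergence. The paper argues surjectivity exactly along your \emph{alternative} route: it exhibits the upper-triangular matrices as giving the subgroup $\mathcal{H}$ of homotheties and translations (hence transitivity on $\mathbb{R}^+\times\mathbb{R}$), and then writes down explicit one-parameter subgroups $T_{-,\theta}$ and $T_{+,t}$ fixing $\iota_\pm$ whose differentials realise the full rotation (resp.\ hyperbolic-boost) group on $T_{\iota_\pm}$; surjectivity then follows from the standard fact that a pseudo-Riemannian isometry is determined by its $1$-jet at a point. Your primary route, the dimension count, is also valid but needs one extra word of justification in the Lorentzian case: you must know that $\operatorname{SO}(g_-)$ is connected, since the injective homomorphism from the connected group $\operatorname{PSL}(2,\mathbb{R})$ a priori only surjects onto the identity component. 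This is true here (and is in effect what the paper's transitivity-plus-isotropy argument establishes), but it is not automatic for orientation-preserving isometry groups of Lorentzian manifolds in general, so your alternative argument is the cleaner one and is precisely what the paper does.
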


\begin{proof} Although this result appears in the literature (see, for example, \cite{C05}), we give the proof as it is entirely elementary
to keep our discussion as self-contained as possible and to establish notation for further use.
We prove the first assertion by computing:
\begin{eqnarray*}
&&A=\left(\begin{array}{cc}a&b\\c&d\end{array}\right),\quad \tilde A=\left(\begin{array}{cc}\tilde a&\tilde b\\\tilde c&\tilde d\end{array}\right),\quad 
A\tilde A=\left(\begin{array}{cc}a\tilde a+b\tilde c&a\tilde b+b\tilde d\\c\tilde a+d\tilde c&c\tilde b+d\tilde d\end{array}\right),\\
&&(T_{\pm,A}\circ T_{\pm,B})z_\pm=\frac{a\frac{\tilde az_\pm+\tilde b}{\tilde cz_\pm+\tilde d}+b}{c\frac{\tilde az_\pm+\tilde b}{\tilde cz_\pm+\tilde d}+d}
=\frac{a(\tilde az_\pm+\tilde b)+b(\tilde cz_\pm+\tilde d)}{c(\tilde az_\pm+\tilde b)+d(\tilde cz_\pm+\tilde d)}\\
&&\hspace{2.6cm}=\frac{(a\tilde a+b\tilde c)\tilde z_\pm+(a\tilde b+b\tilde d)}{(c\tilde a+d\tilde c)z_\pm+(c\tilde b+d\tilde d)}
=T_{\pm,A\circ B}z_\pm\,.
\end{eqnarray*}
We prove Assertion~(2a) by computing:
$$\begin{array}{ll}
2\Im(w_\pm)\iota_\pm&=T_{\pm,A}z_\pm-T_{A,\pm}\bar z_\pm
=\frac{(az_\pm+b)(c\bar z_\pm+d)-(a\bar z_\pm+b)(cz_\pm+d)}
{(cz_\pm+d)(c\bar z_\pm+d)}\\[0.05in]
&=\frac{(ad-bc)(z_\pm-\bar z_\pm)}{(cz_\pm+d)(c\bar z_\pm+d)}
=2\frac{v}{(cz_\pm+d)(c\bar z_\pm+d)}\iota_\pm\,.
\end{array}$$
We use the quotient rule to prove Assertion~(2b) by noting:
$$
dw_\pm=\frac{a(cz_\pm+d)-(az_\pm+b)c}{(cz_\pm+d)^2}dz_\pm=(cz+d)^{-2}dz_\pm\,.
$$
Assertion~(2c) then follows by duality. Note that
\begin{eqnarray*}
&&dz_\pm\otimes d\bar z_\pm+d\bar z_\pm\otimes d\bar z_\pm\\
&=&(du\operatorname{id}+ dv\iota_\pm)\otimes(du\operatorname{id}- dv\iota_\pm)+(du\operatorname{id}-dv\iota_\pm )\otimes(du\operatorname{id}+dv\iota_\pm )\\
&=&2(du\otimes du\operatorname{id}-dv\otimes dv\iota_\pm^2)=2(du\otimes du\mp dv\otimes dv)\operatorname{id}\,.
\end{eqnarray*}
We use this identity to express
$$
g_\mp\operatorname{id}=\frac{dz_\pm\otimes d\bar z_\pm+d\bar z_\pm\otimes dz_\pm}{2\Im(z_\pm)^2}\,.
$$
Assertion~(3) then follows from the identities of Assertion~(2) since
\begin{eqnarray*}
&&\frac{dw_\pm\otimes d\bar w_\pm+d\bar w_\pm\otimes w_\pm}{\Im(w_\pm)^2}\\
&=&\frac{dz_\pm\otimes d\bar z_\pm+d\bar z_\pm\otimes dz_\pm}{(cz_\pm+d)^2(c\bar z_\pm+d)^2}\cdot\frac{(cz_\pm+d)^2 (c\bar z_\pm+d)^2}{\Im(z_\pm)^2}\\
&=&\frac{dz_\pm\otimes d\bar w_\pm+d\bar x_\pm\otimes z_\pm}{\Im(x_\pm)^2}\,.
\end{eqnarray*}

We recover the subgroup $\mathcal{H}$ of Definition~\ref{D1.3} by considering the transformations:
$$
T_{\pm,A}:z_\pm\rightarrow\left\{\begin{array}{lll}mz_\pm&\text{ if }&A=\left(\begin{array}{cc}\sqrt m&0\\0&(\sqrt m)^{-1}\end{array}\right)\\
z_\pm+d&\text{ if }&A=\left(\begin{array}{cc}1&d\\0&1\end{array}\right)\end{array}\right\}\,.
$$
These act transitively on $\mathbb{R}^+\times\mathbb{R}$ by homotheties and translations. But we also recover additional 1-parameter subgroups:
\begin{eqnarray*}
&&T_{-,\theta}(z_-):=\frac{\cos\theta z_-+\sin\theta}{-\sin\theta z_-+\cos\theta}\in\operatorname{SO}(g_+)\text{ for }
A=\left(\begin{array}{cc}\cos\theta&\sin\theta\\-\sin\theta&\cos\theta\end{array}\right),\\
&&T_{+,t}(z_+):=\frac{\cosh(t)z_++\sinh(t)}{\sinh(t)z_++\cosh(t)z_+}\in\operatorname{SO}(g_-)\text{ for }
A=\left(\begin{array}{cc}\cosh t&\sinh t\\ \sinh t&\cosh t\end{array}\right)\,.
\end{eqnarray*}
Note that
\begin{eqnarray*}
&&T_{-,\theta}(\iota_-)=\frac{\cos\theta\iota_-+\sin\theta}{-\sin\theta\iota_-+\cos\theta}=
\frac{(\cos\theta\iota_-+\sin\theta)(\sin\theta\iota_-+\cos\theta)}{(-\sin\theta\iota_-+\cos\theta)(\sin\theta\iota_-+\cos\theta)}=\iota_-,\\
&&T_{+,t}(\iota_+)=\frac{\cosh t\iota_-+\sinh t}{\sinh t\iota_-+\cosh t}=
\frac{(\cosh t\iota_++\sinh t)(-\sinh t\iota_++\cosh t)}{(\sinh t\iota_++\cosh t)(-\sinh t\iota_++\cosh t)}=\iota_+\,.
\end{eqnarray*}
Thus the transformations $T_{+,\theta}$ and $T_{-,t}$ preserve $\iota_\pm$. Since $dT_\theta(\iota)$ is a rotation subgroup
and $dT_t(\iota)$ is a hyperbolic rotation group on the tangent space of $\mathbb{C}_\pm$ at $\iota_\pm$, we obtain the full group of orientation preserving isometries.
\end{proof}

\begin{remark}\rm
A bit of caution must be observed here since we must regard $g_-$ as defined on $\mathbb{R}^2$ minus the real axis
and the elements of $\operatorname{PSL}(2,\mathbb{R})$ are not defined everywhere but only have dense ranges and domains. This will
play no role in what follows so we suppress this technicality in the interests of brevity and simplicity.
\end{remark}

\subsection{The isometry group of the metric $g_0$}\label{S2.2} Again, we let $(u,v)=(x^2,x^1)$ so $g_0=v^{-2}(du\otimes dv+dv\otimes du)$.
Let $\Phi_c(u,v):=(u,v)/(1-cv))$. 
\begin{lemma}\label{L2.4}
If $T\in\operatorname{SO}(g_0)$, then $T=H\circ\Phi_c$ for some $c$ and for some $H\in\mathcal{H}$.
\end{lemma}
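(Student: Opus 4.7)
The plan is to flatten $g_0$ by a well-chosen coordinate change, classify the orientation-preserving isometries of the resulting flat metric, and pull them back. Since $v > 0$, set $w := -1/v$; then $dw = v^{-2}\,dv$, so
\[
g_0 = v^{-2}(du\otimes dv + dv\otimes du) = du\otimes dw + dw\otimes du.
\]
In $(u,w)$ coordinates, $g_0$ is the standard flat Lorentzian metric written in null coordinates, and the classification of its orientation-preserving isometries becomes an essentially linear problem. This single substitution does all the real work; everything afterwards is routine bookkeeping.

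Next I classify orientation-preserving isometries $(u,w)\mapsto(U,W)$ of $du\otimes dw + dw\otimes du$. Expanding the pullback identity and collecting the coefficients of $du\otimes du$, $dw\otimes dw$, and the symmetric cross term yields the three scalar equations $U_u W_u = 0$, $U_w W_w = 0$, and $U_u W_w + U_w W_u = 1$. Of the two case-analysis branches, only $U_w = 0 = W_u$ has positive Jacobian (the other branch gives Jacobian $-1$); hence $U = U(u)$, $W = W(w)$, and $U'(u)\,W'(w) = 1$. Since a product of a function of $u$ alone with a function of $w$ alone can be constant only if both factors are, we conclude that $U(u) = au + b$ and $W(w) = a^{-1} w + e$ for some $a \ne 0$ and $b, e \in \mathbb{R}$.

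Transferring back through $v = -1/w$, the new $v$-coordinate is $V = -1/W = av/(1 - cv)$ with $c := ae$. Since $T$ must map $\{v > 0\}$ to itself, we take $a > 0$, so that $H(u,v) := (au + b,\, av)$ lies in $\mathcal{H}$ with $m = a > 0$ and $d = b$. A direct computation then yields
\[
(H \circ \Phi_c)(u, v) = H\bigl(u,\, v/(1 - cv)\bigr) = \bigl(au + b,\, av/(1 - cv)\bigr) = T(u, v),
\]
which is the desired decomposition $T = H \circ \Phi_c$. The only genuine obstacle is recognizing the substitution $w = -1/v$; once that is in place, every subsequent step is a short routine calculation, and consistency with the dimension count (both $\operatorname{SO}(g_0)$ and the image of $\mathcal{H} \times \mathbb{R}$ under $(H,c)\mapsto H\circ\Phi_c$ are three-dimensional) confirms that this exhausts the isometry group.
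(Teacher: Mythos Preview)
Your proof is correct and follows essentially the same route as the paper: conjugate by the substitution $w=-1/v$ to turn $g_0$ into the flat null metric $du\otimes dw+dw\otimes du$, classify its orientation-preserving isometries as $(u,w)\mapsto(au+b,\,a^{-1}w+e)$, and then undo the substitution to obtain $T=H\circ\Phi_c$. The only difference is that you actually justify the classification of the flat isometries via the PDE $U_uW_u=U_wW_w=0$, $U_uW_w+U_wW_u=1$, whereas the paper simply asserts the normal form $T_1(u,w)=(mu+d,\,m^{-1}w+c)$.
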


\begin{proof} Consider the intertwining map $U(u,v):=(u,-v^{-1})=(u,w)$; $U$ is idempotent, i.e. $(u,v)=(u,-w^{-1})$. Since $dv=w^{-2}dw=v^2dw$,
$$g_0=\textstyle\frac{du\otimes dv+dv\otimes du}{v^2}=du\otimes dw+dw\otimes du\,.$$
This shows that $g_0$ is flat. If $T\in{SO}(g_0)$, let $T_1:=U\circ T\circ U\in\operatorname{SO}(du\otimes dw+dw\otimes du)$. We then have
 $T_1(u,w)=(mu+d,m^{-1}w+c)$. Consequently, 
\begin{eqnarray*}
T=UT_1U:(u,v)&\rightarrow&(u,-v^{-1})\rightarrow(mu+d,-m^{-1}v^{-1}+c)\\
&\rightarrow&(mu+d,-(-m^{-1}v^{-1}+c)^{-1})\,.
\end{eqnarray*}
We take $c=0$ to recover the group $\mathcal{H}$;  $(u,v)\rightarrow(mu+d,mv)$.
 And working modulo the action of $\mathcal{H}$, we may set $m=1$ and $d=0$. This yields 
 \medbreak\hfill
 $(u,v)\rightarrow(u,-v/(-1+cv)=v/(1-cv)$.\hfill\vphantom{.}
 \end{proof}

\subsection{Normalizing the symmetric Ricci tensor}\label{S2.3}
Following Definition~\ref{D1.3}, let
$T_{a,b}:(x^1,x^2)\rightarrow(x^1,ax^1+bx^2)$ for $b>0$ belong to $\mathcal{I}^+$.  If $C\in\mathcal{W}_{\mathcal{B}}(p,q)$,
let $\rho_{s,C}$ be the associated Ricci tensor and  let $\rho_{s,C,ij}$ be the components of the Ricci tensor. Note that $(x^1)^2\rho_{s,C_{ij}}$ is constant.

\begin{lemma}\label{L2.5}
 Let $C\in\mathcal{W}_{\mathcal{B}}(p,q)$.
\begin{enumerate}
\item  Let $(p,q)\in\{(2,0),(0,2)\}$. If $C\in\mathcal{W}_{\mathcal{B}}(p,q)$, then there
 exists a unique  $(\lambda(C),a(C),b(C))\in\mathbb{R}^3$ so that
$\rho_{s,T_{a(C),b(C)}C}=\lambda(C)g_+$. The function $C\rightarrow(\lambda(C),a(C),b(C))(C)$ is a real analytic map from $\mathcal{W}_{\mathcal{B}}(p,q)$
to $\mathbb{R}^3$.
\item Let $(p,q)=(1,1)$ and let $\mathcal{O}_{\mathcal{B}}(1,1):=\{C\in\mathcal{W}_{\mathcal{B}}(1,1):\rho_{C,22}\ne0\}$.
\begin{enumerate}\item If $C\in\mathcal{O}_{\mathcal{B}}(1,1)$, then there exists a unique  $(\lambda(C),a(C),b(C))\in\mathbb{R}^3$ so that
$\rho_{s,T_{a(C),b(C)}C}=\lambda(C)g_-$. The function $C\rightarrow(\lambda(C),a(C),b(C))(C)$ is a real analytic map from  $\mathcal{O}_{\mathcal{B}}(1,1)$ to
$\mathbb{R}^3$.
\item If $C\in\mathcal{W}_{\mathcal{B}}(1,1)-\mathcal{O}_{\mathcal{B}}(1,1)$, then there exists a unique $\varepsilon(C)=\pm1$ and a unique $(a(C),b(C))\in\mathbb{R}^2$  so that
$\rho_{s,T_{(a(C),b(C))}C}=\varepsilon g_0$. The function $(\varepsilon(C),a(C),b(C))$ is continuous on
$\mathcal{W}_{\mathcal{B}}(1,1)-\mathcal{O}_{\mathcal{B}}(1,1)$.
\end{enumerate}\end{enumerate}\end{lemma}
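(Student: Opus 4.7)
The plan is to reduce this lemma to a linear algebra normalization of the symmetric bilinear form $\rho_{s,C}$. Since $\nabla^{T^*C} = T^*\nabla^C$ for $T \in \mathcal{I}^+$, the symmetric Ricci tensor transforms by pullback. Set $M_{ij} := (x^1)^2 \rho_{s,C;ij}$, which are constants depending polynomially on $C$ via~\eqref{E1.a}. Since $T_{a,b,*}\partial_{x^1} = \partial_{x^1} + a\partial_{x^2}$ and $T_{a,b,*}\partial_{x^2} = b\partial_{x^2}$, a direct computation gives
$$(x^1)^2\,T_{a,b}^*\rho_{s,C} = \begin{pmatrix} M_{11} + 2aM_{12} + a^2 M_{22} & b(M_{12} + aM_{22}) \\ b(M_{12} + aM_{22}) & b^2 M_{22} \end{pmatrix}.$$
The problem reduces to matching this constant matrix against the $(x^1)^2$-rescaled matrices of $g_+$, $g_-$, or $g_0$ from Definition~\ref{D2.1}, which are $\operatorname{diag}(1,1)$, $\operatorname{diag}(1,-1)$, and $\bigl(\begin{smallmatrix}0&1\\1&0\end{smallmatrix}\bigr)$ respectively.

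For Assertion~(1) and Assertion~(2a), the signature hypotheses force $M_{22} \ne 0$: in case~(1) the form $\rho_{s,C}$ is definite and so admits no isotropic directions, while in case~(2a) this is precisely the defining condition of $\mathcal{O}_{\mathcal{B}}(1,1)$. I would then choose $a := -M_{12}/M_{22}$ to kill the off-diagonal term, which reduces the matrix to $\operatorname{diag}\bigl(\det(M)/M_{22},\, b^2 M_{22}\bigr)$. Matching against $\lambda\operatorname{diag}(1,1)$ (case~1) or $\lambda\operatorname{diag}(1,-1)$ (case~2a) forces $\lambda := \det(M)/M_{22}$ together with $b^2 = \pm\det(M)/M_{22}^2$, where the sign is $+$ in case~(1) (and $\det M > 0$ by definiteness) and $-$ in case~(2a) (and $\det M < 0$ by signature $(1,1)$). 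Either way the radicand is positive, so the constraint $b > 0$ uniquely determines $b$, and all three output quantities are rational functions of the $M_{ij}$ followed by a square root of a positive real-analytic function, hence real-analytic on the open set in question.

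For Assertion~(2b), the conditions $M_{22} = 0$ and signature $(1,1)$ force $\det(M) = -M_{12}^2 < 0$, so in particular $M_{12} \ne 0$. Matching against $\varepsilon\bigl(\begin{smallmatrix}0&1\\1&0\end{smallmatrix}\bigr)$ makes $(T^*M)_{22} = 0$ automatic, forces $(T^*M)_{11} = 0$ giving $a := -M_{11}/(2M_{12})$, and forces $(T^*M)_{12} = \varepsilon$ giving $b := \varepsilon/M_{12}$; the positivity constraint $b > 0$ then pins down $\varepsilon := \operatorname{sign}(M_{12})$. Continuity of $(\varepsilon, a, b)$ follows because $M_{12}$ is a nonvanishing polynomial in $C$ on $\mathcal{W}_{\mathcal{B}}(1,1)\setminus\mathcal{O}_{\mathcal{B}}(1,1)$ and its sign is therefore locally constant. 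Uniqueness in all three settings is immediate from the forced sequence of algebraic choices. There is no deep obstacle here; the only care required is in the case split itself, checking that each signature hypothesis is exactly what prevents the relevant denominator or radicand from vanishing or becoming negative, and that the positivity of $b$ is compatible with the forced values.
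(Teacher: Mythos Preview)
Your proposal is correct and follows essentially the same approach as the paper: compute how the constant matrix $M=(x^1)^2\rho_{s,C}$ transforms under $T_{a,b}\in\mathcal{I}^+$, use $a$ to kill one entry (the off-diagonal when $M_{22}\ne0$, the $(1,1)$-entry when $M_{22}=0$), then use $b$ to normalize, and read off analyticity/continuity from the explicit rational and square-root formulas. Your write-up is in fact slightly cleaner than the paper's, since you give closed forms $\lambda=\det(M)/M_{22}$ and $b^2=\pm\det(M)/M_{22}^2$ directly, whereas the paper proceeds in two successive steps $T_{a,1}$ followed by $T_{0,b_1}$; the content is identical.
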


\begin{proof} Because $T_{a,b}(x^1,x^2)=(x^1,ax^2+bx^2)$ for $b>0$, we have that:
$$\begin{array}{ll}
(T_{a,b})_*(dx^1)=dx^1,&(T_{a,b})_*(dx^2)=adx^1+bdx^2,\\
(T_{a,b})_*(\partial_{x^1})=\partial_{x^1}-ab^{-1}\partial_{x^2},&(T_{a,b})_*(\partial_{x^2})=b^{-1}\partial_{x^2}\,.
\end{array}$$
Suppose that $\rho_{s,C,22}\ne0$. We apply Equation~(\ref{E1.a}) to compute the Ricci tensor.
For the moment,  let $a(C)$ and $b(C)$ be arbitrary. We compute:
$$
\rho_{s,C}((T_{(a(C),b(C))})_*\partial_{x^1},T_{(a(C),b(C))})_*\partial_{x^2})=b(C)^{-1}\{\rho_{s,C,12}-a(C)\rho_{s,C,22}\}\,.
$$
To ensure that $\rho_{s,C,12}=0$, we must therefore take
$$
a(C):=\frac{\rho_{s,C,12}}{\rho_{s,C,22}}\in\mathbb{R}\,.
$$
Thus $a(C)$ is uniquely determined and is a real analytic function of $C$.
We set $C_1(C):=T_{a(C),1}C$. We now examine
\begin{eqnarray*}
&&\rho_{s,C_1}((T_{0,\tilde b})_*\partial_1,(T_{0,\tilde b})_*\partial_1)=\rho_{C_1}(\partial_1,\partial_1),\\
&&\rho_{s,C_1}((T_{0,\tilde b})_*\partial_2,(T_{0,\tilde b})_*\partial_2)=b^{-2}\rho_{C_1}(\partial_2,\partial_2)\,.
\end{eqnarray*}
Suppose first that $(p,q)\in\{(2,0),(0,2)\}$. Then $\rho_{s,C}$ and hence $\rho_{s,C_1}$ are definite so  $\rho_{s,C_1,11}$ and $\rho_{s,C_1,22}$ have the same sign
and are non-zero.
We set 
$$
b_1(C):=\left|\frac{\rho_{s,C_1,22}}{\rho_{s,C_1,11}}\right|^{1/2}\text{ and }C_2:=T_{(0,b_1(C))}C_1\,.
$$
We have that $\rho_{s,C_2}$ is diagonal and $\rho_{s,C_2,11}=\pm\rho_{s,C_2,22}$. Thus $\rho_{s,C_2}=\lambda(C)g_+$ if
$C\in\mathcal{W}_{\mathcal{B}}(p,q)$ for $(p,q)\in\{(2,0),(0,2)\}$ and $\rho_{s,C_2}=\lambda(C)g_-$ if
$C\in\mathcal{O}_{\mathcal{B}}(1,1)$.  We use the group law
to define $(a(C),b(C))$ so that $T_{(a,b)}=T_{(0,b(C_2))}\circ T_{(a(C),1)}$. The parameters are uniquely determined
and vary real analytically with $C$; Assertion~(1) and Assertion~(2a) now follow.

We complete the proof by establishing Assertion~(2b). Suppose that $\rho_{s,C,22}=0$.
We compute
$$\rho_{s,C}(T_{a,1})_*(\partial_{x^1},(T_{a,1})_*\partial_{x^1})=(x^1)^{-2}\{\rho_{s,C,11}-2a\rho_{s,C,12}\}$$
so $a(C)$ is uniquely determined by requiring that $P_{s,C_1,11}=0$. Choosing $b$ appropriately, we can then use $T_{0,b(C_1)}$ to choose $C_2$ so
 $P_{s,C_2,12}=\pm1$ and complete the proof of Assertion~(2b).
 \end{proof}

\subsection{Reduction to the general linear group (Case 1)}\label{S2.4} Let $C$ define a Type~$\mathcal{B}$ structure 
which is not of Type~$\mathcal {C}$ with
$\rho_{s,C,22}\ne0$ and with $\rho_{s,C}$ non-degenerate.
Suppose that $\Phi$ is the germ of a diffeomorphism
so that $\nabla^{\tilde C}:=\Phi^*(\nabla^C)$ is again of Type~$\mathcal{B}$. We musty show  $\Phi\in\mathcal{G}$.  
Let $\nabla^\pm$ be the Levi-Civita connection of the metrics $g_\pm$. Denote the associated Christoffel symbols by $\Gamma_\pm=(x^1)^{-2}C_\pm$;
they are of Type~$\mathcal{B}$ and are given in Definition~\ref{D2.1}.
Pursuant to the discussion in Section~\ref{S2.3}, we can use the action of $\mathcal{I}^+$ to assume 
$\rho_{s,C}=\lambda g_\varepsilon$ and $\rho_{s,\tilde C}=\tilde\lambda g_\varepsilon$ for $\varepsilon=\pm1$. This implies that
$\Phi^*(g_\varepsilon)=\frac{\lambda}{\tilde\lambda}g_\varepsilon$. Since the metric $g_\varepsilon$ is homogeneous and has non-vanishing Gauss curvature,
$g_\varepsilon$ does not admit a non-trivial homothety. Therefore
$\lambda=\tilde\lambda$ and thus $\Phi$ is an isometry of $g_\varepsilon$. By composing with the action $(x^1,x^2)\rightarrow(x^1,-x^2)$ if necessary, we can assume 
$\Phi$ preserves the orientation and thus $\Phi=T_A$ for $A\in\operatorname{SL}(2,\mathbb{R})$ is given by a linear fractional transformation over the complex
numbers or over the para-complex numbers as appropriate, i.e. 
$$
w:=Tz=\frac{az+b}{cz+d}\text{ for }ad-bc=1\,.
$$
Decompose
$\nabla^C=\nabla^\varepsilon+\frac1{x^1}(C-C_\varepsilon)$. The action by pull-back here is the linear action on $C-C_\varepsilon$
since $T$ preserves $\nabla^\varepsilon$. If $X$ and $Y$ are tangent vectors and if $Z^*$ is a cotangent
vector, then
$$
\{T(C-C_\varepsilon)\}(X,Y,Z^*)=\frac{\Im(z)}{\Im(Tz)}(C-C_\varepsilon)(T X,T Y,T Z^*)\,.
$$
We shall suppress the subscripts on $z_\pm$ and $w_\pm$ to simplify the notation and 
 use Lemma~\ref{L2.2} to express objects in the $z$ coordinate system in terms of the $w$ coordinate system
 to express
 $$
 \frac1{\Im(w)}=(cz+d)(c\bar z+d)\frac1{\Im(z)},\quad \partial_z=(cz+d)^{-2}\partial_w,\quad dz=(cz+d)^2dw\,.
 $$
 We may then express
\begin{eqnarray*}
&&\{T^*(C-C_\varepsilon)\}(\partial_z,\partial_z,dz)=(cz+d)^{-1}(c\bar z+d)(C-C_\varepsilon)(\partial_w,\partial_w,dw),\\
&&\{T^*(C-C_\varepsilon)\}(\partial_z,\partial_{\bar z},dz)=(cz+d)(c\bar z+d)^{-1}(C-C_\varepsilon)(\partial_w,\partial_w,dw),\\
&&\{T^*(C-C_\varepsilon)\}(\partial_{\bar z},\partial_z,dz)=(cz+d)(c\bar z+d)^{-1}(C-C_\varepsilon)(\partial_w,\partial_w,dw),\\
&&\{T^*(C-C_\varepsilon)\}(\partial_{\bar z},\partial_{\bar z},dz)=(cz+d)^3(c\bar z+d)^{-3}(C-C_\varepsilon)(\partial_w,\partial_w,dw)\,.
\end{eqnarray*}
The remaining complex Christoffel symbols are given by conjugation.
By hypothesis $T^*(C-C_\varepsilon)$ and $(C-C_\varepsilon)$ are constant and do not vanish identically since $C$ is not Type~$\mathcal{C}$. 
Thus least one of these equations is non-trivial so
$cz+d=\sigma(c\bar z+d)$ for all $z$ and for some $\sigma$; consequently $c=0$. This implies $Tz=(az+b)/d$ and hence $T\in\mathcal{G}$ as desired.
This completes the proof in this special case.

\subsection{Reduction to the general linear group (Case 2)}\label{S2.5}  Let $C$ define a Type~$\mathcal{B}$ structure 
which is not of Type~$\mathcal {C}$ with
$\rho_{s,C,22}=0$ and with $\rho_{s,C}$ is non-degenerate. We can change coordinates to assume $\rho_{s,C}=\lambda g_0$.
Let $\Phi$ be the germ of a diffeomorphism
so that $\nabla^{\tilde C}:=\Phi^*(\nabla^C)$ is again of Type~$\mathcal{B}$. We argue as before to assume that
$\Phi$ is an isometry of $g_0$ and that $\Phi^*(C-C_0)$ is again of Type~$\mathcal{B}$. We apply Lemma~\ref{L2.4} to
assume that $\Phi(u,v)=(u,v/(1-cv))$ where $(u,v)=(x^2,x^1)$.  We must show $c=0$. Let $\varepsilon_{ijk}=\delta_{1,i}+\delta_{1,j}-\delta_{1,k}$
and let $w=v/(1-cv)$. Then
\begin{eqnarray*}
&&\frac1w=(1-cv)\frac1v,\quad dw=(1-cv)^{-2}dv,\quad\partial_w=(1-cv)^2\partial_v,\\
&&(\Phi^*(C-C_0))(X,Y,Z^*)=\frac{v}{w}(C-C_0)(\Phi_*X,\Phi_*Y,\Phi_*Z^*),\\
&&(\Phi^*(C-C_0))_{ij}{}^k=(1-cv)^{1+2\epsilon_{ijk}}(C-C_0)_{ij}{}^k\,.
\end{eqnarray*}
Thus either $C=C_0$ and $C$ is of Type~$\mathcal{C}$ or $c=0$ and $\Phi\in\mathcal{G}$. This completes the proof that
$\mathcal{X}(C,P)=\mathcal{H}$.
\subsection{Affine Killing vector fields and Type~$\mathcal{A}$ geometry}\label{S2.6}
The Lie algebra of affine Killing vector fields is the Lie algebra of the group of germs of diffeomorphisms which preserve $\nabla^C$. Thus
the fact that $\mathfrak{X}(C,P)=\mathfrak{h}$ is the Lie algebra of $\mathcal{H}$ follows from the fact
that $\mathcal{X}(C,P)=\mathcal{H}$.

\subsection{The pull-back of a Type~$\mathcal{B}$ structure}\label{S2.7} 
We complete the proof of Theorem~\ref{T1.5}~(1) by showing that if $\rho_{s,C}$ is non-degenerate, then $C$ is not Type~$\mathcal{A}$. 
Assume, to the contrary, that there exists the germ of a diffeomorphism $\Phi$
so that $\Phi^*(\nabla^C)$ is of Type~$\mathcal{A}$. We use Lemma~\ref{L2.5} to assume $\rho_{s,C}=\lambda g_\varepsilon$ for $\lambda\ne0$
and suitably chosen $\varepsilon$. Since $\Phi^*\nabla^C$ is Type~$\mathcal{A}$,  $\rho_{\Phi^*\nabla^C}$ has constant coefficients and thus is flat. Since 
$\Phi^*(\rho_{\nabla^C})=\rho_{\Phi^*\nabla^C}$, we conclude $\rho_{\nabla^C}$ is flat. Since the metrics $g_\pm$ have non-zero Gauss curvature,
they are not flat. This implies that $\rho_{\nabla^C}=\lambda g_0$. Let $U(u,v):=(u,-v^{-1})$. Then $U^*(du\otimes dv+dv\otimes du)=g_0$ so
$\Phi^*U^*(du\otimes dv+dv\otimes du)=\Phi^*(\lambda g_0)$. The metrics $du\otimes dv+dv\otimes du$ and $\rho_{\Phi^*\nabla^C}$
have constant coefficients. This implies $T:=U\circ\Phi$ is is linear. Since $U$ is idempotent, $\Phi=U\circ T$. Thus $T^*U^*\nabla^C$ is Type~$\mathcal{A}$. Since
linear maps preserve Type~$A$ structures, we may apply $(T^{-1})^*=(T^*)^{-1}$ to see $U^*\nabla^C$ is Type~$\mathcal{A}$. Thus without loss of generality we may assume $\Phi=U$.

Let $\nabla^e$ be the Levi-Civita connection of the hyperbolic metric $du\otimes dv+dv\otimes du$; this is the usual flat connection on Euclidean space.
Let $\nabla^0$ be the Levi-Civita connection of $g_0$ as given in Definition~\ref{D2.1}. 
We then have $U$ intertwines $\nabla^e$ and
$\nabla^0$. We express $\nabla^C=\nabla^0+\frac{C-C_0}v$ and $U^*\nabla^C=\nabla^e+A$ where $A$ is constant. Set $w=-\frac1v$. We compute
$$
\textstyle A_{ij}{}^k=U^*\left(\frac{C-C_0}v\right)_{ij}{}^k=v^{1+2\epsilon_{ijk}}(C-C_0)_{ij}{}^k\,.
$$
This implies $C=C_0$ so $\nabla^C=\nabla^{C_0}$ is the Levi-Civita connection of $g_0$. This is false as $\rho_{\nabla^0}=0$ and $\rho_{\nabla^C}=\lambda g_0$.
Theorem~\ref{T1.5}~(1).

\section{The topology of the moduli spaces}\label{S3}
\subsection{Principal bundles} Let $G$ be a real analytic Lie group which acts in a real analytic fashion
on a real analytic manifold $N$. 
Let $G_P:=\{g\in G:gP=P\}$ be the {\it isotropy group} of the action.  The action is said to be
{\it fixed point free} if $G_P=\{\operatorname{id}\}$ for all $P$. 
The action is said to be {\it proper} if given points $P_n\in N$ and $g_n\in G$ with $P_n\rightarrow P\in N$ and
$g_nP_n\rightarrow\tilde P\in N$, we can
choose a convergent subsequence so $g_{n_k}\rightarrow g\in G$. We refer to \cite{Bo75,GHL04} for the proof of the following result;
see also the discussion in \cite{G16}.

\begin{lemma}\label{L3.1}
Let the action of $G$ on $N$ be fixed point free, proper, and real analytic.
Then there is a natural real analytic structure on the quotient space $N/G$ so that $G\rightarrow N\rightarrow N/G$ is a principal $G$ bundle.
\end{lemma}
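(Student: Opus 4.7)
The plan is to prove the lemma by the standard slice-theoretic approach adapted to the real-analytic category. The goal is to produce, for every orbit, a $G$-invariant open tube that is equivariantly real-analytically diffeomorphic to $G \times S$, where $S$ is a real-analytic transversal; the transversals then serve simultaneously as local charts for $N/G$ and as local trivializations of the putative principal $G$-bundle.

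First I would verify the two topological conditions on the quotient: the orbit space $N/G$ is Hausdorff and the projection $\pi : N \to N/G$ is open. Openness is immediate, since the $G$-saturation of an open set is a union of translates and hence open. Hausdorffness is where properness enters. If $\pi(P) \neq \pi(Q)$ and every pair of saturated neighborhoods of $P$ and $Q$ intersected, one could select sequences $P_n \to P$ and $g_n P_n \to Q$, extract $g_{n_k} \to g$ by properness, and conclude $gP = Q$, contradicting $\pi(P) \neq \pi(Q)$.

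Next I would construct a real-analytic slice at each $P \in N$. Fixed-point-freeness makes the orbit map $G \to N$, $g \mapsto gP$, an injective real-analytic immersion, and properness upgrades this to a closed embedding, so the orbit $G \cdot P$ is a real-analytic submanifold of $N$. Choose a real-analytic submanifold $S$ through $P$ of complementary dimension whose tangent space is transverse to $T_P(G \cdot P)$. The action map $\mu : G \times S \to N$ is real-analytic, and its differential at $(e,P)$ is a linear isomorphism; the real-analytic inverse function theorem gives a local diffeomorphism onto a neighborhood of $P$. Shrinking $S$ and invoking properness (to rule out recurrences of the form $g_n s_n \to s$ with $g_n$ escaping to infinity) together with fixed-point-freeness (to prevent self-intersections along the $G$-direction) yields an equivariant real-analytic diffeomorphism $\mu : G \times S \to \mu(G \times S)$ onto a saturated open neighborhood of the orbit.

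Finally, $\pi(\mu(G \times S))$ is homeomorphic to $S$, and two such charts $S$ and $\tilde S$ whose images in $N/G$ overlap are related by a real-analytic change of coordinates obtained by projecting one slice onto the other along orbits (this projection is real-analytic because it is a coordinate expression of $\mu^{-1}$ composed with the obvious embedding). This produces a real-analytic atlas on $N/G$. The decompositions $G \times S \to \mu(G \times S)$ furnish the local trivializations of the principal $G$-bundle structure on $N \to N/G$, and since the $G$-action on $G \times S$ is by left translation in the first factor, all the principal bundle axioms are satisfied. The hard part is the slice construction: promoting $\mu$ from a local diffeomorphism to a global equivariant diffeomorphism onto a saturated open set requires the simultaneous use of properness and fixed-point-freeness, and this is essentially the content of the Palais slice theorem that one would be invoking from the cited references.
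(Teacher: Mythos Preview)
Your sketch is the standard slice-theoretic argument and is correct in outline. The paper, however, does not give its own proof of this lemma at all: it simply cites Boothby and Gallot--Hulin--Lafontaine (and a companion paper) for the result. So there is nothing to compare your argument against; you have supplied what the paper delegates to the literature, and your outline is precisely the kind of proof one finds in those references.
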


\subsection{The action of $\mathfrak{I}^+$ on $\mathcal{W}_{\mathcal{B}}(p,q)$}
We have already shown that we may identify the
moduli space $\mathfrak{W}_{\mathcal{B}}^+(p,q)$ with $\mathcal{W}_{\mathcal{B}}(p,q)/\mathcal{I}^+$. Consequently,
Assertion~2 and Assertion~3 of Theorem~\ref{T1.5} will follow from Lemma~\ref{L3.1} and from the following result.
\begin{lemma}\label{L3.2}
Let $p+q=2$.
\begin{enumerate}
\item $\mathcal{I}^+$ acts without fixed points on $\mathcal{W}_{\mathcal{B}}(p,q)$.
\item The action of $\mathcal{I}$ on $\mathcal{W}_{\mathcal{B}}(p,q)$ is real analytic.
\item The action of $\mathcal{I}$ on $\mathcal{W}_{\mathcal{B}}(p,q)$ is proper.
\end{enumerate}
\end{lemma}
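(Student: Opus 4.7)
The plan hinges on the explicit formula for the $\mathcal{I}$-action. Writing $L=\bigl(\begin{smallmatrix}1&0\\a&b\end{smallmatrix}\bigr)$ for the linear part of $T_{a,b}\in\mathcal{I}$, the pullback on Christoffel tensors is
\[(T_{a,b}^*C)_{ij}{}^k=L^p{}_iL^q{}_j(L^{-1})^k{}_rC_{pq}{}^r,\]
a polynomial in $a$, $b$ and $b^{-1}$. The induced action on the (constant) matrix $\hat\rho:=(x^1)^2\rho_{s,C}$ of the symmetric Ricci tensor is the congruence $\hat\rho\mapsto L^T\hat\rho L$, with entries
\[\hat\rho'_{11}=\hat\rho_{11}+2a\hat\rho_{12}+a^2\hat\rho_{22},\quad \hat\rho'_{12}=b(\hat\rho_{12}+a\hat\rho_{22}),\quad \hat\rho'_{22}=b^2\hat\rho_{22}.\]

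Assertion~(2) is then immediate: only $b^{-1}$ (never higher negative powers of $b$) enters the displayed formula, and $b\neq 0$ on $\mathcal{I}$, so the action is real-analytic on $\mathcal{I}\times\mathcal{W}_{\mathcal{B}}(p,q)$. Assertion~(1) is a direct $2\times 2$ matrix analysis: if $T_{a,b}\in\mathcal{I}^+$ fixes $C$, then $L^T\hat\rho_{s,C}L=\hat\rho_{s,C}$. When $\hat\rho_{22}\neq 0$, the $(2,2)$-entry together with $b>0$ gives $b=1$, after which the $(1,2)$-entry gives $a=0$. When $\hat\rho_{22}=0$, non-degeneracy forces $\hat\rho_{12}\neq 0$, and the $(1,2)$- then $(1,1)$-entries again give $b=1$, $a=0$.

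For assertion~(3), suppose $C_n\to C$ and $T_{a_n,b_n}\cdot C_n\to\tilde C$ in $\mathcal{W}_{\mathcal{B}}(p,q)$. Taking determinants of the congruence gives $b_n^2\det\hat\rho_{s,C_n}=\det\hat\rho_{s,T_n\cdot C_n}$; since both sides have nonzero limits, $b_n^2$ converges to a positive limit, and a sign-constant subsequence of $b_n$ converges to some $b\neq 0$. To bound $a_n$: when $\hat\rho_{22,C}\neq 0$, the quadratic identity above for $\hat\rho'_{11,n}$ has bounded left-hand side and leading coefficient bounded away from zero, forcing $a_n$ bounded.

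The main obstacle is the degenerate case $\hat\rho_{22,C}=0$ (possible only in signature $(1,1)$), where that quadratic collapses. The plan is to fall back on polynomial identities for components of $T^*C$ itself, namely
\begin{align*}
(T^*C)_{11}{}^1&=C_{11}{}^1+a(C_{12}{}^1+C_{21}{}^1)+a^2C_{22}{}^1,\\
(T^*C)_{12}{}^2&=C_{12}{}^2+a(C_{22}{}^2-C_{12}{}^1)-a^2C_{22}{}^1,\\
(T^*C)_{21}{}^2&=C_{21}{}^2+a(C_{22}{}^2-C_{21}{}^1)-a^2C_{22}{}^1.
\end{align*}
Substituting the four leading coefficients $C_{22}{}^1$, $C_{12}{}^1+C_{21}{}^1$, $C_{22}{}^2-C_{12}{}^1$, $C_{22}{}^2-C_{21}{}^1$ into Equation~(\ref{E1.a}) shows that their simultaneous vanishing at $C$ forces $\rho_{s,C}$ to be degenerate, contradicting $C\in\mathcal{W}_{\mathcal{B}}(p,q)$. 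A pigeonhole argument then selects, along a subsequence, one coefficient uniformly bounded away from zero, and the corresponding polynomial identity of degree at most $2$ in $a_n$ with bounded coefficients and bounded right-hand side bounds $a_n$.
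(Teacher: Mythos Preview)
Your overall plan is sound, and in fact your determinant trick $b_n^2\det\hat\rho_{s,C_n}=\det\hat\rho_{s,T_n\cdot C_n}$ is a cleaner way to control $b_n$ than the paper's case analysis. Assertions~(1) and~(2) are fine, and the non-degenerate case $\hat\rho_{22,C}\neq 0$ in Assertion~(3) goes through as you say.

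The gap is in the final paragraph. Having \emph{one} of the four quantities $C_{22}{}^1$, $C_{12}{}^1+C_{21}{}^1$, $C_{22}{}^2-C_{12}{}^1$, $C_{22}{}^2-C_{21}{}^1$ nonzero at the limit does not, by itself, bound $a_n$. The difficulty is that all three of your displayed identities share the same quadratic coefficient $\pm C_{n,22}{}^1$. If $C_{22}{}^1=0$ at $C$ but $C_{n,22}{}^1\neq 0$ along the sequence, each identity is a genuine quadratic $\alpha_n a_n^2+\beta_n a_n+\gamma_n$ with $\alpha_n\to 0$; and a single such relation with $\beta_n\to\beta\neq 0$ and bounded value does \emph{not} force $a_n$ bounded --- one can have $|a_n|\to\infty$ with $\alpha_n a_n\to -\beta$. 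So your pigeonhole step, which selects one polynomial and invokes a bounded-root argument, fails precisely when the surviving coefficient is a linear one.

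The repair is to use the three identities simultaneously rather than one at a time. Adding them in pairs cancels the $a_n^2$ term: for instance $(T^*C_n)_{11}{}^1+(T^*C_n)_{12}{}^2$ is linear in $a_n$ with slope $C_{n,21}{}^1+C_{n,22}{}^2$, and similarly for the other two sums and differences. If $|a_n|\to\infty$ along a subsequence, these linear relations force $C_{21}{}^1+C_{22}{}^2=C_{12}{}^1+C_{22}{}^2=C_{21}{}^1-C_{12}{}^1=0$ at the limit, i.e.\ $C_{12}{}^1=C_{21}{}^1=-C_{22}{}^2$; and boundedness of the quadratic $(T^*C_n)_{11}{}^1$ itself then forces $C_{22}{}^1=0$. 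Feeding these relations and the standing hypothesis $\hat\rho_{22,C}=0$ into Equation~(\ref{E1.a}) gives $-2(C_{22}{}^2)^2=0$, hence $C_{12}{}^1=C_{21}{}^1=C_{22}{}^2=0$, and then $\rho_{s,C;12}=0$ as well --- contradicting non-degeneracy. This is essentially the content of the paper's Case~3b, though the paper reaches it via a preliminary implicit-function normalization rather than the linear-combination trick.
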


\begin{proof} 
Since there exist a unique $(\lambda,a,b)$ so that $T_{a,b}\rho_{s,C}=\lambda g_\varepsilon$, it follows that the action of $\mathcal{I}^+$ on
$\mathcal{W}_{\mathcal{B}}(p,q)$ is fixed point free and Assertion~1 follows. Assertion~2 is immediate from the definition.

We choose a slightly more convenient parametrization of $\mathcal{I}$ to prove Assertion~3. For $b\ne0$, set
$S_{a,b}(x^1,x^2):=(x^1,b^{-1}(-ax^1+x^2))$. Then:
\begin{equation}\label{E3.a}\begin{array}{ll}
(S_{a,b})_*(dx^1)=dx^1,&(S_{a,b})_*(dx^2)=b^{-1}(-adx^1+dx^2),\\
(S_{a,b})_*(\partial_{x^1})=\partial_{x^1}+a\partial_{x^2},&(S_{a,b})_*(\partial_{x^2})=b\partial_{x^2}\,.
\end{array}\end{equation}
We fix $x^1=1$ and regard $\rho_{s,C}$ as a function of $C$. We then have
\begin{eqnarray*}
&&(S_{a,b}^*\rho_{s,C})_{11}=\rho_{s,C,11}+2a\rho_{s,C,12}+a^2\rho_{s,C,22},\\
&&(S_{a,b}^*\rho_{s,C})_{12}=b(\rho_{s,C,12}+a\rho_{s,C,22}),\quad(S_{a,b}^*\rho_{s,C})_{22}=b^2\rho_{s,C,22}\,.
\end{eqnarray*}

Suppose that
\begin{equation}\label{E3.b}
C_n\rightarrow C_\infty\quad\text{and}\quad\tilde C_n:=S_{a_n,b_n}C_n\rightarrow\tilde C_\infty\,.
\end{equation}
We wish to show there exists a convergent subsequence so $S_{a_n,b_n}\rightarrow S\in\mathcal{I}$. We have equivalently
$$
\tilde C_n\rightarrow\tilde C_\infty\quad\text{and}\quad C_n=S_{a_n,b_n}^{-1}\tilde C_n\rightarrow C_\infty\,.
$$
Clearly there exists a subsequence so $S_{a_n,b_n}\rightarrow S$ if and only if there exists a subsequence so $S_{a_n,b_n}^{-1}\rightarrow S^{-1}$. 
Thus the roles of $C_\star$ and $\tilde C_\star$ are entirely equivalent. We compute:
\begin{eqnarray}
&&\rho_{s,\tilde C_\infty,22}=\lim_{n\rightarrow\infty} b_n^2\rho_{s,C_n,22},\label{E3.c}\\
&&\rho_{s,\tilde C_\infty,12}=\lim_{n\rightarrow\infty}\{b_n(\rho_{s,C_n,12}+a_n\rho_{s,C_n,22})\},\label{E3.d}\\
&&\rho_{s,\tilde C_\infty,11}=\lim_{n\rightarrow\infty}\{\rho_{s,C_n,11}+2a_n\rho_{s,C_n,12}+a_n^2\rho_{s,C_n,22}\}\,.\label{E3.e}
\end{eqnarray}
Note that we can conjugate Equation~(\ref{E3.b}) replacing $\{C_n,C_\infty,\tilde C_n,\tilde C_\infty,S_{a_n,b_n}\}$ by 
$\{\theta C_n,\theta C_\infty,\tilde\theta\tilde C_n,\tilde\theta\tilde C_\infty,\tilde\theta S_{a_n,b_n}\theta^{-1}\}$; $\{S_{a_n,b_n}\}$ admits a convergent subsequence if and only if
$\{\tilde\theta S_{a_n,b_n}\theta^{-1}\}$ admits a convergent subsequence. We choose $\theta$ and $\tilde\theta$ normalize $\rho_{s,C_\infty}$ and $\rho_{s,\tilde C_\infty}$
using Lemma~\ref{L2.5}. We distinguish various cases. 
\subsection*{Case 1: $\rho_{s,\tilde C_\infty,22}\ne0$ and $\rho_{s,C_\infty,22}\ne0$}
Equation~(\ref{E3.c}) implies
$\lim_{n\rightarrow\infty}b_n^2$ exists. Thus by passing to a subsequence if necessary, we may assume that $b_n\rightarrow b\ne0$. 
Examining Equation~(\ref{E3.d}) then implies $\lim_{n\rightarrow\infty}a_n$ exists.

\subsection*{Case 2a. $\rho_{s,C_\infty,22}\ne0$ and $\rho_{s,\tilde C_\infty,22}=0$} We may assume that 
$\rho_{s,C_\infty}=\lambda g_\pm$ and $\rho_{s,\tilde C_\infty}=\tilde\lambda g_0$.
Since $\lim_{n\rightarrow\infty}\rho_{s,C_n,22}=\rho_{s,C_\infty,22}=\lambda\ne0$ and $\rho_{s,\tilde C_\infty,22}=0$,
Equation~(\ref{E3.c}) implies $b_n\rightarrow0$. By Equation~(\ref{E3.d}), $a_n\rightarrow\infty$.
Equation~(\ref{E3.e}) then provides a contradiction as $\rho_{s,C_n,22}\rightarrow\lambda\ne0$ and $\rho_{s,C_n,12}$ and $\rho_{s,C_n,11}$ are bounded.

\subsection*{Case 2b. $\rho_{s,C_\infty,22}=0$ and $\rho_{s,\tilde C_\infty,22}\ne0$} We interchange the roles of $\{C_n,C_\infty\}$
and $\{\tilde C_n,\tilde C_\infty\}$ and use the argument of Case 2a.

\subsection*{Case 3. $\rho_{s,C_\infty,22}=\rho_{s,\tilde C_\infty,22}=0$} By Lemma~\ref{L2.5}, we may assume that
$\rho_{s,C_\infty}=\varepsilon g_0$ and $\rho_{s,\tilde C_\infty}=\tilde\varepsilon g_0$ for
$\varepsilon\in\{\pm1\}\text{ and }\tilde\varepsilon\in\{\pm1\}$.
Let $\Psi(\alpha,\beta;C):\mathbb{R}^2\times\mathcal{W}_{\mathcal{B}}(2)\rightarrow\mathbb{R}^2$ be defined by setting:
\begin{eqnarray*}
&&\Psi(\alpha,\beta;C):=((S_{\alpha,\beta}\rho_{s,C})_{12},(S_{\alpha,\beta}\rho_{s,C})_{11})\\
&&\hspace{1.78cm}=(\beta\rho_{s,C,12}+\alpha\beta\rho_{s,C,22},\rho_{s,C,11}+2\alpha\rho_{s,C,12}+\alpha^2\rho_{s,C,22})\,.
\end{eqnarray*}
We fix $C$ and compute the Jacobian with respect to $(a,b)$:
\begin{eqnarray*}
\det\Psi^\prime&=&\det\left(\begin{array}{cc}\beta\rho_{s,C,22}&\rho_{s,C,12}+\alpha\rho_{s,C,22}\\
2\rho_{s,C,12}+2\alpha\rho_{s,C,22}&0\end{array}\right)\\
&=&-2(\rho_{s,C,12}+\alpha\rho_{s,C,22})^2\,.
\end{eqnarray*}
Suppose $\rho_{s,C_1}=\varepsilon g_0$ for $\varepsilon=\pm1$. Then
$$
\Psi(0,1;C_1)=(\varepsilon,0)\text{ and }\det\Psi^\prime(0,1;C_1)=-2\,.
$$
Let $\epsilon>0$ be given.
The inverse function shows that there exists $\delta=\delta(\epsilon)>0$ so that if $|C-C_1|<\delta$, then there exists a unique $(\alpha,\beta)$ 
with $|(\alpha,\beta-1)|<\epsilon$ so that $\Psi(\alpha,\beta;\rho_{s,C})=(\lambda,0)$, i.e.
$$
(S_{\alpha(C),\beta(C)}\rho_{s,C})_{12}=\varepsilon\text{ and }(S_{\alpha(C),\beta(C)}\rho_{s,C})_{22}=0\text{ for }|C-C_1|<\delta\,.
$$
Assume Equation~(\ref{E3.b}) holds with $\rho_{s,C_\infty}=\varepsilon g_0,$ and $\rho_{s,\tilde C_\infty}=\tilde\varepsilon g_0$.
We may then choose $(\alpha_n,\beta_n)\rightarrow(0,1)$ and $(\tilde\alpha_n,\tilde\beta_n)\rightarrow(0,1)$ so
$$\begin{array}{ll}
(S_{\alpha_n,\beta_n}C_n)_{11}=0,&(S_{\alpha_n,\beta_n}C_n)_{12}=\varepsilon,\\[0.05in]
(S_{\tilde\alpha_n,\tilde\beta_n}\tilde C_n)_{11}=0,&(S_{\tilde\alpha_n,\tilde\beta_n}\tilde C_n)_{12}=\tilde\varepsilon\,.
\end{array}$$
Thus by replacing $\{C_n,C_\infty,\tilde C_n,\tilde C_\infty,S_{a_n,b_n}\}$ by 
$$
\{S_{\alpha_n,\beta_n}C_n,S_{\tilde\alpha_n,\tilde\beta_n}S_{\alpha_n,\beta_n}S_{\alpha_n,\beta_n}^{-1},
C_\infty,S_{\tilde\alpha_n,\tilde\beta_n}\tilde C_n,S_{\tilde\alpha_n,\tilde\beta_n}\tilde C_\infty,S_{a_n,b_n}\}\,,
$$
we may replace Equation~(\ref{E3.b}) by
$$\begin{array}{llll}
C_n\rightarrow C_\infty,&\tilde C_n:=S_{a_n,b_n}C_n\rightarrow\tilde C_\infty,&\rho_{s,C_\infty}=\varepsilon g_0,&\rho_{s,\tilde C_\infty}=\tilde\varepsilon g_0,\\[0.05in](\rho_{s,C_n})_{11}=0,&(\rho_{s,C_{n}})_{12}=\varepsilon,&(\rho_{s,\tilde C_n})_{11}=0,&(\rho_{s,\tilde C_n})_{12}=\tilde\varepsilon\,.
\end{array}$$
We then get the equations:
\begin{eqnarray}
&&0=\lim_{n\rightarrow\infty} b_n^2\rho_{s,C_n,22},\\
&&\tilde\varepsilon=\lim_{n\rightarrow\infty}b_n(\varepsilon+a_n\rho_{s,C_n,22}),\label{E3.g}\\
&&0=\lim_{n\rightarrow\infty}(2a_n\varepsilon+a_n^2\rho_{s,C_n,22})\label{E3.h}\,.
\end{eqnarray}

\subsection*{Case 3a: $\{a_n\}$ is a bounded sequence} We pass to a subsequence to ensure $a_n\rightarrow a$. 
Since $\rho_{s,C_n,22}\rightarrow0$, we use Equation~(\ref{E3.h})
to conclude $a_n\rightarrow0$. Equation~(\ref{E3.g}) then shows $b_n\rightarrow\frac\varepsilon{\tilde\varepsilon}$. This completes the proof in this special case.

\subsection*{Case 3b: $|a_n|\rightarrow\infty$} Because
$\lim_{n\rightarrow\infty}a_n(2\varepsilon+a_n\rho_{s,C_n,22})=0$, we have that $\lim_{n\rightarrow\infty}2\varepsilon+a_n\rho_{s,C_n,22}=0$. We substitute this
into Equation~(\ref{E3.g}) to conclude that $\lim_{n\rightarrow\infty}b_n(\varepsilon-2\varepsilon)=\tilde\varepsilon$. 
Thus $\lim_{n\rightarrow\infty}b_n$ exists and is non-zero. We use
Equation~(\ref{E3.a}) to compute:
\begin{eqnarray}
\tilde C_{12}{}^1&=&\lim_{n\rightarrow\infty}(S_{a_n,b_n}^*C_n)_{12}{}^1=\lim_{n\rightarrow\infty}\{b_nC_{n,12}{}^1+a_nb_nC_{n,22}{}^1\}\label{E3.i},\\
\tilde C_{11}{}^1&=&\lim_{n\rightarrow\infty}(S_{a_n,b_n}^*C_n)_{11}{}^1=\lim_{n\rightarrow\infty}\{C_{n,11}{}^1+2a_nC_{n,12}{}^1+a_n^2C_{n,22}{}^1\}
\label{E3.j},\\
\tilde C_{12}{}^2&=&\lim_{n\rightarrow\infty}(S_{a_n,b_n}^*C_n)_{12}{}^2\nonumber\\
&=&\lim_{n\rightarrow\infty}\{C_{n,12}{}^2+a_nC_{n,22}{}^2-a_n(C_{n,12}{}^1+a_nC_{n,22}{}^1)\}\label{E3.k}
\end{eqnarray}
We examine Equation~(\ref{E3.i}) and the sum of Equation~(\ref{E3.j}) and Equation~(\ref{E3.k}) to see that
$\lim_{n\rightarrow\infty}a_nC_{n,22}{}^1$ and $\lim_{n\rightarrow\infty}a_n(C_{n,12}{}^1+C_{n,22}{}^2)$ exist. Since $|a_n|\rightarrow\infty$,
$\lim_{n\rightarrow\infty}C_{n,22}{}^1=0$ and $\lim_{n\rightarrow\infty}(C_{n,12}{}^1+C_{n,22}{}^2)=0$. We obtain similarly that
$\lim_{n\rightarrow\infty}(C_{n,21}{}^1+C_{n,22}{}^2)=0$. Consequently
$$C_{22}{}^1=0\text{ and }C_{12}{}^1=C_{21}{}^1=-C_{22}{}^2\,.
$$
We impose these relations and compute
$$
(x^1)^2\rho_{s,C,22}=-2(C_{22}{}^2)^2\text{ and }(x^1)^2\rho_{s,C,12}=C_{22}{}^2-C_{21}{}^2C_{22}{}^2\,.
$$
Since $\rho_{s,C,22}=0$ by hypothesis, we have $C_{22}{}^2=0$ and hence $\rho_{s,C,12}=0$ which is false. Thus this case does not in fact arise.
\end{proof}

\subsection{The proof of Theorem~\ref{T1.5}~(4)}
Since $\mathcal{I}^+$ is a contractible Lie group, it follows that any $\mathcal{I}^+$ principal bundle is trivial. The
fact that $\mathcal{W}_{\mathcal{B}}(p,q)\rightarrow\mathfrak{W}_{\mathcal{B}}^+(p,q)$ is a trivial principal bundle 
also follows for $(p,q)\in\{(2,0),(0,2)\}$ from Lemma~\ref{L2.5}.\hfill\qed
\subsection{The unoriented moduli space}\label{S3.4}
Let $\mathcal{I}_C:=\{T_{a,b}\in\mathcal{I}:T_{a,b}C=C\}$ be the isotropy group associated to an element $C\in\mathcal{W}_{\mathcal{B}}(p,q)$. Modulo conjugation,
we can assume $a=0$ and thus if the isotropy group is non-trivial, $C$ is invariant under the coordinate transformation $(x^1,x^2)\rightarrow(x^1,-x^2)$.
This yields the relations $C_{11}{}^2=C_{12}{}^1=C_{21}{}^1=C_{22}{}^2=0$. We compute:
$$
\rho_C=(x^1)^{-2}\left(
\begin{array}{cc}
 (C_{11}{}^1-C_{12}{}^2+1) C_{21}{}^2 & 0 \\
 0 & (C_{11}{}^1-C_{12}{}^2-1) C_{22}{}^1 \\
\end{array}
\right)\,.
$$
We rescale $x^2$ to put $\rho_C$ in diagonal form. This normalizes $C$ and we obtain a
smooth real-analytic 3-dimensional submanifold $\mathfrak{S}$ of $\mathfrak{W}_{\mathcal{B}}^+$. Since $\mathfrak{W}_{\mathcal{B}}^+$ is
6 dimensional, the normal bundle of $\mathfrak{S}$ in $\mathfrak{W}_{\mathcal{B}}^+$ is 3-dimensional and the fiberwise quotient by $\mathbb{Z}_2$
shows that $\mathfrak{W}_{\mathcal{B}}$ is not smooth but has a $\mathbb{Z}_2$ orbifold singularity along the image of $\mathfrak{S}$;
the link being $\mathbb{RP}^2$. By contrast, the corresponding submanifolds of the torsion free moduli space
$\mathfrak{Z}_{\mathcal{B}}^+$ are 2-dimensional since $C_{12}{}^2=C_{21}{}^2$
and since the ambient manifold is 4-dimensional, the quotient is smooth and the projection $\mathfrak{Z}_{\mathcal{B}}^+\rightarrow\mathfrak{Z}_{\mathcal{B}}$
is a ramified $\mathbb{Z}^2$ covering with links $\mathbb{RP}^1=S^1$.

\end{document}